\documentclass[draft]{amsart}

\setlength{\textheight}{43pc}
\setlength{\textwidth}{28pc}
\usepackage{amsmath, amssymb, amsthm}
\usepackage{enumerate}
\usepackage{url}
\usepackage{amscd}

\theoremstyle{plain}

\newtheorem{theorem}{Theorem}[section]

\newtheorem{proposition}{Proposition}[section]

\newcommand{\R}{{\mathbb{R}}}

\numberwithin{equation}{section}

\begin{document}
\keywords{Integration over the Unit Sphere in $\R^n$, Integration over the Unit Ball in $\R^n$}
\subjclass[2020]{26B25, 42B99}
\title{Integration of Monomials over the Unit Sphere and Unit Ball in $\R^n$}

\author{Calixto P. Calder\'on}
\address{Dept of Math, Stat \& Comp Sci\\ University of Illinois at Chicago\\
 Chicago IL 60607} \email{calixtopcalderon@gmail.com}
\author{Alberto Torchinsky}
\address{Department of Mathematics\\ Indiana University\\
Bloomington IN 47405} \email{torchins@iu.edu}

\begin{abstract} We compute the integral   of monomials of the form $x^{2\beta}$   over the unit sphere and the unit ball  in $\R^n$  where $\beta=(\beta_1,\ldots,\beta_n)$  is a   multi--index  with real components  $\beta_k>-1/2$, $1\le k\le n$,  and discuss their asymptotic behavior as some, or all, $\beta_k\to  \infty$.   This allows for 
 the evaluation of    integrals involving  circular and hyperbolic trigonometric functions over the unit sphere   and the unit ball  in $\R^n$.
We also consider the Fourier transform of monomials  $x^{\alpha}$ restricted to the unit sphere  in $\R^n$, where the multi--indices $\alpha$ have  integer components,   and discuss their behaviour at the origin. 
\end{abstract}
\date{}

\maketitle

\section{Introduction}

Polynomial integration over the unit sphere is  used in a variety of applications   in 
Physics, including those situations that    involve integrands containing a Green’s function over the unit sphere. For example, 
Mura used a Fourier transform applied to an anisotropic Green’s function to integrate the displacement field inside an ellipsoidal domain over the unit
sphere \cite{TM}.  And,    Asaro and Barnett     integrated 
the strain field inside an ellipsoidal domain subjected to stress free strain (a polynomial of degree $M$)  over the
unit sphere   and observed that  the formulation becomes  complex for $ M > 1$,
 {\cite{AB}}. 

Othmani discussed these and other examples, including the  analytical expressions for the finite Eshelby tensors, and by means of  an inductive argument combined with the divergence theorem,   concluded that (in  dimensions $2$  and $3$) the integral  of monomials of order $k$ over the unit sphere   
vanishes if $k$ is odd, and satisfies an iterative relation if $k$ is even \cite{YO, YOHM}.  
There is a caveat here. By symmetry considerations, the integral over the unit sphere of   a monomial of even order  that contains an odd power of one of its space variables in its expression, also  vanishes. 

Higher dimensional results 
were considered by   Bochniak  and   Sitarz  when  addressing  the spectral interaction between universes  \cite{BS}.  And   Wang, when discussing the small sphere limit of quasilocal energy in higher dimensions along lightcone cuts,  raised the possibility of integrating the reciprocal of monomials over the unit sphere  \cite{JW}. 

From an analytical point of view  Baker and    Namazi independently  adopted a similar approach -- an inductive argument combined with the divergence theorem -- when considering  what Namazi called a generalized   Wallis formula \cite{JAB, N}. And, from a different perspective,  Kazarinoff  noted that  negative powers may be allowed in Wallis $1$-dimensional formula {\cite[p.19]{DK}.

As for our results, we begin by introducing some notations.  For an integer $k$, let $k{!!}=k(k-2)\cdots 2$ if $k$ is even, and $k{!!}=k(k-2)\cdots 1$ if $k$ is odd.  Given 
a multi-index $\alpha=(\alpha_1,\ldots,\alpha_n)$ of  nonnegative integers  and a scalar $\lambda$, let 
\[ \lambda\,\alpha=(\lambda\, \alpha_1,\ldots,\lambda\,\alpha_n),\quad\alpha{!}=\alpha_1{!}\cdots \alpha_n{!}\,,\quad |\alpha|=\alpha_1+\cdots +\alpha_n.
\]

And, for $ x=(x_1,\ldots,x_n)$  in $\R^n,$ let
$ x^\alpha = x_1^{\alpha_1} \cdots x_n^{\alpha_n}.$  
Now,   for 
a multi-index $\beta=(\beta_1,\ldots,\beta_n)$ of real numbers,    since  for  real numbers    $t,\delta$ we interpret 
$t^{2\delta}=\Big(\big( t^{2}\big)^{{1/2}}\Big)^{2 \delta}=|t|^{2 \delta},
$
  we have 
$ x^{2\beta } = x_1^{2\beta_1} \cdots x_n^{2\beta_n}= |x_1|^{2\beta_1} \cdots |x_n|^{2\beta_n}.
$

We will assume that  $n$ is greater than or equal
to $3$, and denote with  $d\sigma$  the element of surface area on the unit sphere $\partial B(0,1)$ in $\R^n$,  and with $\omega_n={2 \pi^{ n/2}}/{\Gamma(n/2)}$  the surface area of the unit sphere in $\R^n$. Here $\Gamma(s)$ denotes the Gamma function defined by $\Gamma(s)=\int_0^\infty e^{-t} t^{s-1}\,dt$, $s>0$.

Symmetry plays a role in our discussion. Indeed, the integral over the unit sphere of  any monomial $x^\alpha$ where $\alpha_j$ is odd for some $j$,   is $0$. And, since
\[\int_{\partial B(0,1)}x_j^2 \,d\sigma (x) =\int_{\partial B(0,1)}x_k^2 \,d\sigma (x)\,,\quad 1\le j,k\le n, \]  it readily follows that
\begin{equation*}\frac1{\omega_n} \int_{\partial B(0,1)}x_k^2 \,d\sigma (x)= \frac{1}{n}, \quad 1\le k\le n.
\end{equation*}
This expresses the most elementary form of a generalized   Wallis formula in higher dimensions.

In 1939 Hermann  Weyl  proved   what he called a well--known formula for calculating the mean value of the monomial $x^{2\alpha}$ over the unit sphere in $\R^n$,  {\cite[(12),  p.\! 465]{HW}}, to wit, 
\begin{equation}
\frac1{\omega_n}\, \int_{\partial B(0,1)}x^{2\alpha}\,d\sigma(x)=
\frac{\prod_{ \alpha_k\ne 0,k=1}^n (2\alpha_k-1){!!}}{ (n+2 |\alpha|-2)  \cdots (n+2) n}.
\end{equation}
 We refer to (1.1)  as   {\it{Wallis $n$--dimensional formula}}.

And, a similar result  holds for the unit ball in $\R^n$. Indeed, 
  passing to polar coordinates, since $v_n=\omega_n/n$, from (1.1) it follows that 
\begin{align}\frac1{v_n}\,  \int_{  B(0,1)}x^{2\alpha}\,d x &=n\,\int_0^1 r^{n+2|\alpha|-1} dr\, \frac1{\omega_n}\, \int_{\partial B(0,1)}x^{2\alpha}\,d\sigma(x)\nonumber
\\
&=
\frac{\prod_{ \alpha_k\ne 0,k=1}^n (2\alpha_k-1){!!}}{  (n+2 |\alpha|)  (n+2 |\alpha|-2)  \cdots (n+2) }.
\end{align}

 Weyl's result, and approach, was revisited in \cite{GBF}, where it is  remarked that Wallis formula remains valid for multi--indices $\beta=(\beta_1,\ldots,\beta_n)$ of non-negative real numbers $\beta_k$, $1\le k \le n$.

Wallis $n$--dimensional formula  is the underlying principle of this note, which is organized as follows. In Section 2 we show that
Weyl's approach extends to  monomials $x^{2\beta}$  where  the multi--indices $\beta$  have real components  $\beta_k>-1/2$, $1\le k\le n$, and discuss the asymptotic behaviour as some, or all, $\beta_k\to  \infty$. And,   similar results hold  for the unit ball in $\R^n$. 
In Section 3 we develop the tools to address the Fourier transform of monomials  $x^{\alpha}$
restricted to the unit sphere  in $\R^n$ when the multi--indices $\alpha$ have  integer components, which is then carried out in Section 4; these transforms, when evaluated at the origin, yield Wallis $n$--dimensional formula. And, in Section 5  we
apply Wallis $n$--dimensional formula  to  evaluate    integrals involving  circular and hyperbolic trigonometric functions over the unit sphere   and the unit ball  in $\R^n$. 
The Bernoulli and Euler numbers appear naturally in this context. 

\section{Generalized Wallis Formula in $\R^n$}

 {\it {Plane wave functions}}, that is,  functions   on $\R^n$  that are constant along the hyperplanes perpendicular to a fixed direction in $\R^n$, are an important tool when integrating over the unit sphere. 
 Fritz John  considered in particular  plane wave functions  $g(y \cdot x) $    of  $x\in \R^n$,  where  $g(s)$ is  a     continuous function  of the scalar variable $ s$ and $y$  a  fixed vector  in $\R^n$,
 which    are  constant along the hyperplanes perpendicular to the direction of $y$, 
and obtained the  fundamental identity 
\begin{equation*}\int_{\partial B(0,1)} g\big(y\cdot x\,\big) \,d\sigma(x)= \omega_{n-1}
\int_{-1}^1 \big(1-t^2)^{(n-3)/2}
g(|y|\,t)\,dt\end{equation*}
for the integral of  $g(x\cdot y)$ over the unit sphere {\cite[(1.2),  p.\! 8]{FJ}}.

In the particular case that  $g(t)=|t|^k$, where $k$ is an even integer,
    Fritz John  observed that  
\begin{equation*}\int_{\partial B(0,1)}\big|\, y\cdot x\,\big|^k \,d\sigma(x)= c_{k,n}\,|y|^k,
 \end{equation*}
and (1.1) may be derived  from this setting $k=2\,|\alpha|$.

The    formulation of Weyl's  result anticipated in the Introduction is the following:
\begin{proposition}
Let  $\beta=(\beta_1,\ldots, \beta_n)$ be a  multi-index  of real numbers with $\beta_k>-1/2$, $1\le k\le n$. Then,  the integral over the unit sphere in $\R^n$ of the monomial  $x^{2\beta}$  can be computed as
\begin{equation}\int_{\partial B(0,1)}x^{2 \beta} \,d\sigma (x) =2 \ \frac1{\Gamma(|\beta|+ n/2)}\, \prod_{k=1}^n \Gamma(\beta_k+1/2).
\end{equation}

In particular, when $\beta= \alpha$  is a multi--index of nonnegative integers, (1.1) holds.

Furthermore, asymptotically,
\begin{align}\int_{\partial B(0,1)} &x^{2 \beta} \,d\sigma (x)\nonumber\\
& \sim 2  (2 \pi)^{(m-1)/2} e^{(\beta_{m+1}+\cdots+\beta_n)} \prod_{k=m+1}^n \Gamma(\beta_k+1/2) \frac{\prod_{k=1}^m \beta_k^{\beta_k} }{ |\beta|^{|\beta|+(n-1)/2} }
\end{align}
as $\beta_1,\ldots, \beta_m\to\infty$, for $1\le m \le n$.

And,   if all the $\beta_j \to\infty$, we have
\[\int_{\partial B(0,1)} x^{2 \beta} \,d\sigma (x)\sim  2  (2 \pi)^{(n-1)/2} \frac{\prod_{k=1}^n \beta_k^{\beta_k} }{ |\beta|^{|\beta|+(n-1)/2} }.\]
\end{proposition}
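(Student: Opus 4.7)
The plan is to extend Weyl's Gaussian approach to real multi-indices, then obtain both the special case (1.1) and the large-$\beta$ asymptotics from (2.1) by elementary manipulations of the Gamma function.

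First I would compute $I(\beta) := \int_{\R^n} e^{-|x|^2} x^{2\beta}\,dx$ in two ways. Factoring coordinate-wise and substituting $u = x_k^2$ in each of the resulting one-dimensional integrals yields
\[ I(\beta) = \prod_{k=1}^n \int_{-\infty}^\infty e^{-x_k^2}|x_k|^{2\beta_k}\,dx_k = \prod_{k=1}^n \Gamma(\beta_k + 1/2), \]
where the hypothesis $\beta_k > -1/2$ is precisely what is needed for integrability at the origin. Passing instead to polar coordinates $x = r\omega$, the radial and spherical integrals separate and give
\[ I(\beta) = \int_0^\infty r^{2|\beta|+n-1} e^{-r^2}\,dr \cdot \int_{\partial B(0,1)} \omega^{2\beta}\,d\sigma(\omega) = \tfrac12\,\Gamma(|\beta|+n/2) \int_{\partial B(0,1)} \omega^{2\beta}\,d\sigma(\omega). \]
Equating the two expressions yields (2.1) after solving for the sphere integral. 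To recover (1.1) when $\beta = \alpha$ has nonnegative integer components, I would insert the half-integer identity $\Gamma(\alpha_k+1/2) = \sqrt{\pi}\,(2\alpha_k-1){!!}/2^{\alpha_k}$ (with the convention $(-1){!!}=1$) together with the telescoping expression $\Gamma(|\alpha|+n/2) = \Gamma(n/2)\cdot 2^{-|\alpha|}\,n(n+2)\cdots(n+2|\alpha|-2)$; after dividing by $\omega_n = 2\pi^{n/2}/\Gamma(n/2)$, the stray $\pi^{n/2}$ and $\Gamma(n/2)$ cancel to leave precisely (1.1).

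For the asymptotics I would apply Stirling's formula $\Gamma(s) \sim \sqrt{2\pi}\,s^{s-1/2}e^{-s}$ to each Gamma factor in (2.1). Writing $(\beta_k+1/2)^{\beta_k} = \beta_k^{\beta_k}(1+1/(2\beta_k))^{\beta_k} \sim \beta_k^{\beta_k} e^{1/2}$, and analogously for $|\beta|+n/2$, one obtains
\[ \Gamma(\beta_k+1/2) \sim \sqrt{2\pi}\,\beta_k^{\beta_k}e^{-\beta_k}, \qquad \Gamma(|\beta|+n/2) \sim \sqrt{2\pi}\,|\beta|^{|\beta|+(n-1)/2}e^{-|\beta|}. \]
Applying these for $k=1,\ldots,m$ while leaving the remaining $n-m$ Gamma factors intact, the exponentials combine as $e^{-(\beta_1+\cdots+\beta_m)+|\beta|} = e^{\beta_{m+1}+\cdots+\beta_n}$, and the factors of $\sqrt{2\pi}$ collect into $(2\pi)^{(m-1)/2}$. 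This delivers (2.2). Setting $m=n$ eliminates the trailing Gamma product and reduces the exponential to $1$, producing the final displayed asymptotic.

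The main obstacle is simply the careful bookkeeping of lower-order corrections in Stirling: one must verify that the factor $(1+1/(2\beta_k))^{\beta_k} \to e^{1/2}$ in the numerator exactly cancels the $e^{-1/2}$ coming from $e^{-(\beta_k+1/2)}$, and that the analogous cancellation holds in the denominator. Once these cancellations are pinned down, the ratios telescope cleanly and the stated asymptotics follow immediately.
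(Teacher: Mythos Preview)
Your proposal is correct and follows essentially the same strategy as the paper: compute the Gaussian integral $\int_{\R^n} e^{-|x|^2} x^{2\beta}\,dx$ in two ways (Cartesian factoring versus polar coordinates) to obtain (2.1), then apply Stirling's formula for the asymptotics. The paper inserts an $\varepsilon$--truncation near the origin and passes to the limit, but since the integrand is already in $L^1(\R^n)$ by Tonelli (your remark that $\beta_k>-1/2$ is exactly the integrability condition), your direct computation is a harmless simplification of the same argument.
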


\begin{proof}
Fix $\varepsilon>0$, let $B(0,\varepsilon)$ denote the ball of radius $\varepsilon$ centered at the origin,  and set
\[f(x,\varepsilon)= x^{2\beta}e^{-|x|^2} \big( 1-\chi_{B(0,\varepsilon)}(x)\big),\quad x\in \R^n.
\]

Then, passing to  polar coordinates, it readily follows that 
\begin{equation*}\int_{\R^n} f(x,\varepsilon)\,dx=\int_\varepsilon^\infty e^{-r^2} r^{2\,|\beta|+n-1}\, dr\,  \int_{\partial B(0,1)}x^{2 \beta} \,d\sigma (x),
\end{equation*}
where 
\[\int_\varepsilon^\infty e^{-r^2} r^{2\,|\beta|+n-1}\, dr =\frac12 \int_{\varepsilon^2}^\infty e^{-r} r^{|\beta|+n/2-1/2}r^{-1/2}\,dr.\]

Hence, since by assumption   $|\beta|=\beta_1+\cdots+\beta_n>-n/2$, it readily follows that 
\begin{equation}\lim_{\varepsilon\to 0} \int_{\R^n}  f(x,\varepsilon)\,dx =\frac12 \Gamma(|\beta|+ n/2)\,  \int_{\partial B(0,1)}x^{2 \beta} \,d\sigma (x).
\end{equation}

Let now $\chi_Q(0, 2\varepsilon)$ denote the characteristic function of the cube of sidelength $2\varepsilon$ centered at the origin, and observe that
\begin{align*}\int_{\R^n} f(x,\varepsilon)\,dx &=\int_{\R^n}  x^{2\beta}e^{-|x|^2} \big( 1-\chi_{Q(0,2\varepsilon)}(x)\big) \,dx\\
& +
\int_{\R^n}  x^{2\beta}e^{-|x|^2} \big(\chi_{Q(0,2\varepsilon)}(x)- \chi_{B(0,\varepsilon)}(x)\big)\,dx.
\end{align*}

Now, with $\alpha=1/\sqrt{2 n}$ it readily follows that
\[    \quad\Big|\chi_{Q(0,2\varepsilon)}(x)- \chi_{B(0,\varepsilon)}(x)\Big|\lesssim  \Big(\chi_{Q(0,2\varepsilon)}(x)- \chi_{Q(0,2\alpha \varepsilon)}(x)\Big) 
\]
and, consequently, 
that
\begin{align} \Big|\int_{\R^n}   x^{2\beta}e^{-|x|^2}&  \Big(\chi_{Q(0,2\varepsilon)}(x)- \chi_{Q(0,2\alpha \varepsilon)}(x)\Big) \,dx\Big|\nonumber\\
&\lesssim\sum_{k=1}^n  \int_{ 2\alpha \varepsilon}^{\varepsilon}  |x_k|^{2\beta_k}    \,dx_k
\lesssim\sum_{k=1}^n \varepsilon^{1+2\beta_k}, 
\end{align}
which, since $1+2\beta_k>0$ for all $k$, tends to $0$ as $\varepsilon\to 0$. 

Now, the integral 
\[
\int_{\R^n}  x^{2\beta}e^{-|x|^2} \big( 1-\chi_{Q(0,2\varepsilon)}(x)\big) \,dx\] 
can be computed as
\begin{align*}\prod_{k=1}^n \int_{\R}e^{-x_k^2}\,x_k^{2\beta_k} \big( 1-\chi_{Q(0,2\varepsilon)}(x_k)\big)dx_k 
&=2^n\prod_{k=1}^n \int_{\varepsilon}^\infty e^{-x_k^2}\,x_k^{2\beta_k}dx_k\\ 
&= \prod_{k=1}^n \int_{\varepsilon^2 }^\infty e^{-x_k}\,x_k^{\beta_k-1/2}dx_k  
\end{align*}
and, therefore,  we have
\begin{equation}\lim_{\varepsilon\to 0}\int_{\R^n}  x^{2\beta}e^{-|x|^2} \big( 1-\chi_{Q(0,2\varepsilon)}(x)\big) \,dx = \prod_{k=1}^n \Gamma(\beta_k+1/2)\
\end{equation}

Hence,  combining (2.4) and (2.5) we get
 \[\lim_{\varepsilon\to 0} \int_{\R^n}  f(x,\varepsilon)\,dx =\prod_{k=1}^n \Gamma(\beta_k+1/2),\]
which together with (2.3) gives 
\[
\frac12 \Gamma(|\beta|+ n/2)\,  \int_{\partial B(0,1)}x^{2 \beta} \,d\sigma (x)
=\prod_{k=1}^n \Gamma(\beta_k+1/2),\]
and (2.1) holds.

Now,  unraveling  the expression on the right--hand side of (2.1)  when $\beta=\alpha$ is an integer multi--index
it  follows  that in this case  also  (1.1) holds.

As for the asymptotic behavior of (2.1),  we recall Stirling's formula,
\[ \Gamma(x+a)\sim \sqrt{2 \pi x}\,  x^{x+a-1} e^{-x},
\]
where $a\ge 0$, as $x\to \infty$.

 Suppose that $\beta_1,\ldots,\beta_m\to\infty$, $1\le m\le n$.   Then we have
 \[\Gamma(\beta_k+1/2)\sim \sqrt{2 \pi }\,  \beta_k^{\beta_k} e^{-\beta_k},\quad  1\le k\le m,
\]
and
\[\Gamma(|\beta|+n/2)\sim \sqrt{2 \pi}\,  |\beta|^{\beta|+(n-1)/2} e^{-|\beta|},
\]
and, consequently,  
\begin{align} \frac1{\Gamma(|\beta|+ n/2)}\, \prod_{k=1}^m \Gamma(\beta_k+1/2)  &\sim \frac1{\sqrt{2 \pi}\,  |\beta|^{|\beta|+(n-1)/2} e^{-|\beta|}}\,  \prod_{k=1}^m  \sqrt{2 \pi }\,  \beta_k^{\beta_k} e^{-\beta_k}\nonumber
\\
& \sim (2 \pi)^{(m-1)/2}\,e^{(\beta_{m+1}+\cdots+\beta_n)} \, \frac{\prod_{k=1}^m \beta_k^{\beta_k} }{ |\beta|^{|\beta|+(n-1)/2} }.
\end{align}

(2.2) follows at once from (2.1) and (2.6). 
\end{proof}

As for  the integral over the unit ball in $ {\R^n} $ we have:
\begin{proposition}
Let  $\beta=(\beta_1,\ldots, \beta_n)$ be a  multi-index  of real numbers with $\beta_k>-1/2$, $1\le k\le n$. Then,  the integral of the monomial  $x^{2\beta}$ over the unit   ball in $\R^n$ can be computed as
\begin{equation}\int_{  B(0,1)}x^{2 \beta} \,d x  =  \ \frac1{\Gamma((|\beta|+1) + n/2)}\, \prod_{k=1}^n \Gamma(\beta_k+1/2).
\end{equation}

In particular, when $\beta= \alpha$  is a multi--index of nonnegative integers, (1.2) holds.

Furthermore, asymptotically,
\begin{align}\int_{ B(0,1)} &x^{2 \beta} \,d x \nonumber\\
& \sim  (2 \pi)^{(m-1)/2} e^{(\beta_{m+1}+\cdots+\beta_n)} \prod_{k=m+1}^n \Gamma(\beta_k+1/2) \frac{\prod_{k=1}^m \beta_k^{\beta_k} }{ |\beta|^{|\beta|+(n+1)/2} }
\end{align}
as $\beta_1,\ldots, \beta_m\to\infty$, for $1\le m \le n$.

And,   if all the $\beta_j \to\infty$, we have
\[\int_{  B(0,1)} x^{2 \beta} \,d x \sim     (2 \pi)^{(n-1)/2} \frac{\prod_{k=1}^n \beta_k^{\beta_k} }{ |\beta|^{|\beta|+(n+1)/2} }.
\]
\end{proposition}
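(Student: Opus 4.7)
The plan is to reduce Proposition 2.2 to Proposition 2.1 by passing to polar coordinates, exactly as indicated for the integer case in the Introduction, and then to transfer the Stirling analysis in (2.6) with one small adjustment.

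First I would write, for any $\beta$ with $\beta_k>-1/2$ (so that $|\beta|>-n/2$ and hence $2|\beta|+n-1>-1$),
\[
\int_{B(0,1)} x^{2\beta}\,dx = \Big(\int_0^1 r^{2|\beta|+n-1}\,dr\Big) \int_{\partial B(0,1)} x^{2\beta}\,d\sigma(x).
\]
The hypothesis $\beta_k>-1/2$ also makes each one-dimensional factor $|x_k|^{2\beta_k}$ locally integrable, so there is no issue applying Fubini or polar coordinates near the origin. The radial integral equals $1/(2|\beta|+n)=1/(2(|\beta|+n/2))$. Substituting (2.1) and using the functional equation $\Gamma(s+1)=s\,\Gamma(s)$ with $s=|\beta|+n/2$ telescopes the factor of $2$ and yields (2.7) at once.

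For the ``in particular'' statement, when $\beta=\alpha$ is an integer multi-index one unravels $\Gamma(\alpha_k+1/2)$ through $\Gamma(1/2)=\sqrt{\pi}$ and the recursion, exactly as was done at the corresponding step in the proof of Proposition 2.1 to derive (1.1) from (2.1); the $\sqrt{\pi}$ factors combine with the $\Gamma((|\alpha|+1)+n/2)$ in the denominator to recover (1.2).

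For the asymptotics I would rerun the Stirling calculation (2.6) with $\Gamma(|\beta|+n/2)$ replaced by $\Gamma(|\beta|+n/2+1)$. Stirling gives $\Gamma(|\beta|+n/2+1)\sim\sqrt{2\pi}\,|\beta|^{|\beta|+(n+1)/2}e^{-|\beta|}$, so the denominator picks up one extra power of $|\beta|$ compared with (2.6), raising the exponent from $(n-1)/2$ to $(n+1)/2$; the prefactor also loses the overall factor of $2$ present in (2.2), since (2.7) does not carry the leading $2$ of (2.1). Both (2.8) and the $m=n$ limiting case then drop out, with no genuine obstacle beyond bookkeeping; the only point requiring care is the convergence of the radial integral at $r=0$, which is precisely what the constraint $\beta_k>-1/2$ guarantees.
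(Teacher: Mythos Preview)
Your proposal is correct and follows essentially the same route as the paper: polar coordinates reduce the ball integral to $\frac{1}{2|\beta|+n}$ times the sphere integral from (2.1), and the functional equation for $\Gamma$ gives (2.7). The paper's only cosmetic differences are that it inserts an $\varepsilon$-cutoff before taking the limit (your direct justification via $2|\beta|+n-1>-1$ is equally valid), and for the asymptotics it writes $\frac{1}{2|\beta|+n}\sim\frac{1}{2|\beta|}$ and then quotes (2.2) rather than redoing Stirling on $\Gamma(|\beta|+n/2+1)$, which amounts to the same bookkeeping you describe.
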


\begin{proof}
For $\varepsilon>0$, consider 
\[g(x,\varepsilon)=x^{2\beta} \chi_{B(0,1)\setminus B(0,\varepsilon)(x)}.\]

Then, passing to polar coordinates it readily follows that
\begin{equation*}\int_{\R^n} g(x,\varepsilon)\,dx= \int_\varepsilon^1 r^{n+2|\beta |-1} dr\, \int_{\partial B(0,1)}x^{2\beta}\,d\sigma(x),
\end{equation*}
where, by assumption $n+2|\beta|>0$, and, consequently,
\[\lim_{\varepsilon\to 0} \int_\varepsilon^1 r^{n+2|\beta |-1} dr=\frac1{(2|\beta |+n )}. 
\]

Thus,  by (2.1) we have
\begin{align*}\int_{ B(0,1)}x^{2 \beta} \,dx &=\lim_{\varepsilon\to 0} \int_{\R^n} g(x,\varepsilon)\,dx \\
&=\frac1{(2|\beta |+n )} \,  \int_{\partial B(0,1)}x^{2\beta}\,d\sigma(x) \\
&=  \frac1{ \Gamma((|\beta|+1)+ n/2)}\, \prod_{k=1}^n \Gamma(\beta_k+1/2),
\end{align*}
which gives (2.7).

Moreover,  since
\[\frac{1}{2|\beta|+ n}\sim \frac{1}{2|\beta|} \quad {\text{as  $|\beta|\to\infty$,}} \]
 the  asymptotic values (2.8) follow at once from (2.2), and we have finished.
\end{proof}

\section{Bessel Functions}
In this section we cover the preliminary material to  discuss the Fourier transform of mononials restricted to the unit sphere in $\R^n$.

$J_\nu(x)$, the Bessel function of order $\nu$,  
 is defined as the solution of the second order linear equation
\[ t^2\, \frac{d^2y}{dt^2} + t \frac{dy}{dt}+(t^2-\nu^2)\,y=0. \]

Basic properties of the Bessel functions follow readily from their power series    expansion \cite {W},
\[ J_v(t)= \Big( \frac{t}{2}\Big)^\nu\sum_{k=0}^\infty \frac{(-1)^k}{k{!}\,\Gamma(\nu+k+1)}
\Big( \frac{t}{2}\Big)^{2t}.\]

They include:
\begin{equation*}\lim_{t\to 0^+}\frac{J_\nu(t)}{t^\nu}=2^{-\nu}\frac1{\Gamma(\nu+1)},
\end{equation*} 
 and the recurrence formula 
\begin{equation*}\frac{d}{dt}\Big(\frac{J_\nu(t)}{t^\nu}\Big)=-t^{-\nu}\, J_{\nu+1}(t).
\end{equation*}

In this note we will work with  the function $\Psi_\nu(t)$ defined by
\begin{equation}\Psi_\nu(t)=\frac{J_\nu(t)}{t^\nu}= 
2^{-\nu}\sum_{k=0}^\infty \frac{(-1)^k}{k{!}\,\Gamma(\nu+k+1)}
\Big( \frac{t}{2}\Big)^{2k}.
\end{equation}
 Then the above relations become, respectively,
\begin{equation}\lim_{t\to 0^+}\Psi_\nu(t)=2^{-\nu}\frac1{\Gamma(\nu+1)}\,,
\end{equation} 
 and 
\begin{equation}\Psi_\nu'(t)=-t\, \Psi_{\nu+1}(t).
\end{equation}

Now, repeated applications of (3.3) yield 
\begin{equation}  \Psi_\nu''(t) =- \Psi_{\nu+1}(t)+t^2\Psi_{\nu+2}(t),
\end{equation}
\begin{equation*}\Psi_\nu'''(t) =3t\Psi_{\nu+2}(t)-t^3\Psi_{\nu+3,}(t),
\end{equation*}
\begin{equation}  \Psi_\nu^{iv}(t) =3 \Psi_{\nu+2}(t)-6t^2\Psi_{\nu+3}(t)+t^4 \Psi_{\nu+4}(t),
\end{equation}
\[  \Psi_\nu^{v}(t) =-15 t\Psi_{\nu+3}(t)+10 t^3\Psi_{\nu+4}(t)-t^5 \Psi_{\nu+5}(t),
\]
and so on.

Thus,  the following pattern emerges. If $D$ denotes  differentiation with repect to $t$, 
 $D^k\Psi_\nu(t)$ is a polynomial  of degree $k$ in $t$ with coefficients $c_j\, \Psi_{\nu+j}(t)$ for some scalars $c_j$  with $0\le j\le k$. 
And, since as is readily seen from (3.1),  $\Psi_\nu(t)$ is an even function, its derivatives of odd order are odd functions, and those of even order are even.  

Moreover, for  integers $k=1,2,\ldots$, $D^{2k}\Psi_\nu(t)$ is an even  polynomial of degree $2k$ in $t$ consisting of $k+1$ terms that can be written as
\begin{align}D^{2k}&\Psi_\nu(t)\nonumber\\
&= t^{2k}\Psi_{\nu+2k}(t)- c_{2k-1}t^{2(k-1)}\Psi_{\nu+2k-1}(t)+\cdots+(-1)^k c_k\Psi_{\nu+k}(t).
\end{align}

In fact, it can be readily seen that if (3.6) holds for $2k$, then, on account of (3.4), it also holds for $2(k+1)$, and it is thus valid for all $2k$. 

A similar argument applies to $D^{2k+1}\Psi_\nu(t)$ for  integers $k=1,2,\ldots$ In this case $D^{2k+1}\Psi_\nu(t)$ is an odd polynomial of degree $2k+1$ in $t$ consisting of $k+1$ terms that can be written as
\begin{align}&D^{2k+1}\Psi_\nu(t)\nonumber\\
&= -t^{2k+1}\Psi_{\nu+2k+1}(t)+d_{2k}t^{2k-1}\Psi_{\nu+2k}(t)+\cdots+ (-1)^{k+1} d_k\,t \Psi_{\nu+k+1}(t),
\end{align}
and which can be  verified as in the even case.

We are   particularly   interested    in the constant term when the polynomial is even, and  in the term corresponding to $t$ when the polynomial  is odd. We adopt here the usual notation
that $\varphi(\xi)=o(|\xi|^\eta )$, where $\eta>0$,  as $|\xi|\to 0$, provided that 
\[ \lim_{|\xi|\to 0} \frac{|\varphi(\xi)|}{|\xi|^\eta}=0.
\]

We begin by proving:
\begin{proposition}
With $\nu>0$, let 
\[\Psi_\nu(\xi)=\frac{J_\nu(|\xi|)}{|\xi|^\nu}\,,\quad \xi\in \R^n.\] 
 Then, 
\begin{equation}\lim_{|\xi|\to 0}\Psi_\nu(\xi)=2^{-\nu}\frac1{\Gamma(\nu+1)}.
\end{equation}

Moreover, for $\varepsilon>0$,   $1\le j\le n$,  and $k=1,2,\ldots$,
\begin{equation}\frac{\partial^{2k}\Psi_\nu(\xi)}{\partial \xi_j^{2k}}=( -1)^k   (2k-1){!!} \, \Psi_{\nu+k}( \xi ) +o(|\xi|^{2-\varepsilon})
\end{equation}
as $|\xi|\to 0$.

And
\begin{equation}\frac{\partial^{2k+1}\Psi_\nu(\xi)}{\partial \xi_j^{2k+1}}=( -1)^{k+1} (2k+1){!!} \,  ( \xi_j)\,\Psi_{\nu+k+1}( \xi ) +o(|\xi|^{3-\varepsilon})
\end{equation}
as $|\xi|\to 0$.
\end{proposition}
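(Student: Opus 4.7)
The plan is to lift the one-variable identities (3.6)--(3.7), already established for $\psi_\nu(t)=J_\nu(t)/t^\nu$, to the multivariable function $\Psi_\nu(\xi)=\psi_\nu(|\xi|)$, and then bound the non-leading summands. Because $\psi_\nu$ contains only even powers of $t$, the identity $\Psi_\nu(\xi)=\sum_{l\ge 0} b_l\,|\xi|^{2l}$ (with $b_l$ the coefficients read off (3.1)) exhibits $\Psi_\nu$ as real-analytic on all of $\R^n$; moreover (3.8) is immediate from (3.2) with $t=|\xi|$.

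The key calculation is a single-variable chain rule. Using $\partial_j|\xi|=\xi_j/|\xi|$ together with (3.3),
\[
\partial_j\Psi_\nu(\xi)=\psi_\nu'(|\xi|)\,\frac{\xi_j}{|\xi|}=-|\xi|\,\psi_{\nu+1}(|\xi|)\,\frac{\xi_j}{|\xi|}=-\xi_j\,\Psi_{\nu+1}(\xi),
\]
which is the precise analog of $\psi_\mu'(t)=-t\,\psi_{\mu+1}(t)$ with $\xi_j$ in place of $t$ and $\Psi_\mu(\xi)$ in place of $\psi_\mu(t)$. The inductive derivation of (3.6)--(3.7) invoked only this recursion and the product rule, so the identical induction gives
\begin{align*}
\partial_j^{2k}\Psi_\nu(\xi)&=\xi_j^{2k}\Psi_{\nu+2k}(\xi)-c_{2k-1}\,\xi_j^{2(k-1)}\Psi_{\nu+2k-1}(\xi)+\cdots+(-1)^k c_k\,\Psi_{\nu+k}(\xi),\\
\partial_j^{2k+1}\Psi_\nu(\xi)&=-\xi_j^{2k+1}\Psi_{\nu+2k+1}(\xi)+\cdots+(-1)^{k+1}d_k\,\xi_j\,\Psi_{\nu+k+1}(\xi),
\end{align*}
with exactly the coefficients $c_k, d_k$ appearing in (3.6)--(3.7).

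To pin these coefficients down, I restrict to the axis $\xi=t\,e_j$, where $\Psi_\nu(t e_j)=\psi_\nu(t)$ (using that $\psi_\nu$ is even) and $\partial_j^m\Psi_\nu(t e_j)=\psi_\nu^{(m)}(t)$. Matching $\psi_\nu^{(2k)}(0)=(2k)!\,b_k$, obtained by termwise differentiation of (3.1), against the constant term $(-1)^k c_k\,\psi_{\nu+k}(0)$ of (3.6) forces $c_k=(2k)!/(2^k k!)=(2k-1){!!}$; comparing the coefficient of $t$ in $\psi_\nu^{(2k+1)}(t)$ with $(-1)^{k+1}d_k\,\psi_{\nu+k+1}(0)$ analogously forces $d_k=(2k+1){!!}$. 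Finally, every summand in the multivariable expansions other than the leading one carries a factor $\xi_j^{2i}$ with $i\ge 1$ (respectively $\xi_j^{2i+1}$ with $i\ge 1$), which is dominated by $|\xi|^2$ (resp.\ $|\xi|^3$), while the accompanying $\Psi_{\nu+k+i}(\xi)$ stays bounded near the origin by (3.8); hence these remainders are $O(|\xi|^2)=o(|\xi|^{2-\varepsilon})$ and $O(|\xi|^3)=o(|\xi|^{3-\varepsilon})$, yielding (3.9) and (3.10). The one step needing genuine care is the coefficient identification: the multivariable induction preserves the constants $c_k,d_k$ but does not compute them, and reducing to the $\xi_j$-axis is the cleanest way to invoke the explicit power-series computation and avoid propagating symbolic constants through the $n$-variable recursion.
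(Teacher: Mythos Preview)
Your proof is correct and follows essentially the same route as the paper: reduce the multivariable derivative to the one-variable recursion $\partial_j\Psi_\nu(\xi)=-\xi_j\Psi_{\nu+1}(\xi)$ via the chain rule, carry the induction (3.6)--(3.7) over with $\xi_j$ in place of $t$, and identify the constants $c_k,d_k$ from the explicit power series (3.1). The only cosmetic difference is in the determination of $d_k$: the paper extracts it through the intermediate relation $d_k=c_k+2c_{k+1}$ together with $2c_{k+1}+c_k=(2k+1){!!}$ obtained from $D^{2k+2}\psi_\nu(0)$, whereas you read $d_k$ off directly as the coefficient of $t$ in $\psi_\nu^{(2k+1)}$; your route is slightly more direct, and your explicit remark that the non-leading terms are $O(|\xi|^2)$ (resp.\ $O(|\xi|^3)$), hence $o(|\xi|^{2-\varepsilon})$ (resp.\ $o(|\xi|^{3-\varepsilon})$), makes the error estimate more transparent than in the paper.
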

\begin{proof}
(3.8) is a restatement of (3.2). Now, since 
\[\frac{\partial\Psi_\nu(\xi)}{\partial \xi_j}=\frac{\partial{\Big(  |\xi|^{-\nu}\,J_{\nu}( |\xi|)\Big) }}{{\partial \xi_j}}=
- \xi_j\, \frac{J_{\nu+1}( |\xi|)}{ |\xi|^{\nu+1}}=- \xi_j\,\Psi_{\nu+1}(\xi),
\]
and, similarly, 
\begin{equation*}\frac{\partial^{2}\Psi_\nu(\xi)}{\partial \xi_j^{2}}= -\Psi_{\nu+1}(\xi)+\xi_j^2\Psi_{\nu+2}(\xi),
\end{equation*}
 these expressions correspond to (3.3) and (3.4) with $\Psi_\nu(t)$ replaced by $\Psi_\nu(\xi)$ and  $t$ replaced by $\xi_j$ there.  The same applies to all other statements, so we will consider $\Psi_\nu(t)$ in what follows.

Now, in the even case we are interested in the  constant term. Setting  $t=0$ in  (3.6) it follows that
\begin{equation}D^{2k}\Psi_{\nu}(0)=  (-1)^k c_k \Psi_{\nu+k }(0).
\end{equation}

Also, by  (3.1) it readily follows that 
\begin{equation*}D^{2k} \Psi_\nu(0)=2^{-\nu} (-1)^k\frac1{k{!}}
\frac1{\Gamma(\nu+k+1) 2^{2k}}\,2k{!},
\end{equation*}
which, since 
\[\frac1{2^{2k}}\,\frac{2k{!}}{k{!}}=2^{-k} (2k-1){!!}
\]
can be restated as
\begin{equation*}D^{2k} \Psi_\nu(0)=(-1)^k 2^{-(\nu+k)}  
\frac1{\Gamma(\nu+k+1)}  (2k-1){!!},
\end{equation*}

Thus, since by (3.8)
\[ 2^{-(\nu+k)}\,\frac1{\Gamma(\nu+k+1)}=\Psi_{\nu+k }(0),
\]
it follows that 
\begin{equation}D^{2k} \Psi_\nu(0) =(-1)^k  (2k-1){!!} \,\Psi_{\nu+k}(0),
 \end{equation}
and by (3.11) and (3.12) we get
\[(-1)^kc_k \Psi_{\nu+k}(0)=(-1)^k (2k-1){!!}\,\Psi_{\nu+k}(0),
\]
and, therefore, $c_k=(2k-1){!!}$ 

We are also interested  in the coefficient of the term corresponding to $t$ when $k$ is odd. 
Note that, from (3.4) applied to (3.6) it follows that
\begin{equation}D^{2k+2} \Psi_\nu (t)=  (-1)^{k+1}2\, c_{k+1}\Psi_{\nu+k+1}(t)   + (-1)^{k+1} c_k \Psi_{\nu+k+1}(t)+ \Phi(t),
\end{equation}
where $\Phi(t)=o(|t|^{4-\varepsilon})$, and $\Phi(0)=0$.  Hence,
\begin{equation}D^{2k+2} \Psi_\nu (0)=  (-1)^{k+1}\big( 2c_{k+1}  +c_k\big) \Psi_{\nu+k+1}(0).
\end{equation}

And, since as in (3.12),  it follows that
\begin{equation*}
D^{2k+2} \Psi_\nu (0)=(-1)^{k+1} (2k+1){!!}\,\Psi_{\nu+k+1}(0),
\end{equation*}
 combining (3.13) and (3.14),  we get 
\begin{equation} 2 c_{k+1} +  c_k= (2k+1){!!},
\end{equation} 
which, since $c_k=(2k-1){!!}$, 
 implies  that
\begin{equation*} c_{k+1}=\frac12 \big((2k+1){!!}-(2k-1){!!}\big)=
k  \, (2k-1){!!} 
\end{equation*}

To determine $d_k$ now, observe that differentiating  (3.5), from (3.3) it readily follows that
\[D^{2k+1}\Psi_\nu(t)= (-1)^{k+1} ( c_k+2c_{k+1}) t\, \Psi_{\nu+k+1}(t)+ \Phi(t),
\]
where $\Phi(0)=0$, and, therefore, comparing this expression with (3.6) it readily follows that
$d_k=c_k+2c_{k+1}.$
Hence, from (3.15)   we conclude that
\[  d_k= (2k+1){!!}
\]
and the proof is finished.
\end{proof}

The  above reasoning allows for the determination of all the desired coefficients in (3.6) and (3.7). 
For example, one  may verify that  $c_{k+2}$ in (3.6) is equal to
\[c_{k+2}= 
\frac1{3{!}}\, k(k-1)   (2k-1){!!}    
\]

At this time, no obvious pattern  to compute the various coefficients is apparent to us.  

\section{The Fourier Transform of Monomials Restricted to  the Unit Sphere}
The theory of  polynomials on the unit sphere of $\R^n$  is developed in 
{\cite[Chapter 5]{ABR}}, where in particular a refined version of Weyl's result (1.1) for harmonic polynomials is given.  Here we complement those results by  computing the Fourier transform of  monomials restricted to the unit sphere of $\R^n$ and discussing their behaviour at the origin.. 

Recall that  when  $n\ge 3$,   the Fourier transform of the surface area measure carried on the
sphere $\partial  B(0, 1)$  centered at the origin of radius 1 in $\R^n$, 
is given by  {\cite[p.\! 154]{SW}},   
 \begin{equation} \int_{\partial B(0,1)}  e^{-ix\cdot\xi}\,d\sigma (x) 
=(2\pi)^{n/2}\,
 \Psi_{(n-2)/2}(  \xi ).
 \end{equation}

To fix ideas we consider  first the Fourier transform of the monomial $x_1^{2k}$ restricted to the unit sphere. 
 Differentiating (4.1) with respect to $\xi_1$ it follows that
\begin{align*}\frac{1}{(2\pi)^{n/2}}\, \int_{\partial B(0,1)} & x_1^{2k}\,e^{-ix\cdot \xi}\,d\sigma (x)    
\\
&=  (2k-1){!!}\,\,  \Psi_{(n/2)+k-1}(\xi) - k(2k-1){!!}\, \xi_1^2\,\Psi_{(n/2)+k}(\xi)
\\
&+\frac1{3{!}}k(k-1) (2k-1){!!}\,\xi_1^4\,  \Psi_{(n/2)+k+1}(\xi)+ o(|\xi|),
\end{align*}
and we are done in this case. 

For the applications we have in mind it suffices to invoke  Proposition 3.1 with $\nu=(n-2)/2$ there,
and observe that 
\begin{equation}\int_{\partial B(0,1)}x_1 ^{2k} e^{-ix\cdot \xi}   \,d\sigma (x)=(2\pi)^{n/2}\,  (2k-1){!!} \,  \Psi_{(n/2)+k-1}(\xi) +o(|\xi|).
\end{equation}

It is important to note that all the summands that appear in the $o(|\xi|)$ term   have  a    positive power of   $\xi_1$ in them. The reason  being that, when derivatives are taken with respect to variables other than $\xi_1$, the expression remains  an  $o(|\xi|)$ term.

Next, concerning the monomial $x_1^{2k+1}$, 
differentiating (4.2) one more time with respect to $\xi_1$, by (3.10) in  Proposition 3.1 with $\nu=(n-2)/2$ there 
it follows that  
\begin{align}\int_{\partial B(0,1)}x_1 ^{2k+1}&e^{-ix\cdot \xi}   \,d\sigma (x)\nonumber\\
&=(2\pi)^{n/2}   (2k+1){!!} \,(i \,\xi_1 )\,  \Psi_{(n/2)+k}(\xi) +o(|\xi|),
\end{align}
which gives the desired result in this case.

 We introduce some notations to address the general case. 
Given a multi-index $ \alpha=(\alpha_1,\ldots, \alpha_n)$,  let  $ \alpha^o=(\alpha_1^o,\ldots, \alpha_n^o)$ where  $\alpha_j^o=\alpha_j$  if $\alpha_j$ is an  odd integer and $=0$ otherwise,  $\mu(\alpha^o)$ the number of indices with  $\alpha_j^o\ne 0$,  and $ \alpha^e=(\alpha_1^e,\ldots, \alpha_n^e)$,  where $\alpha_j^e=\alpha_j-\alpha_j^o$. 

Let 
\[ A_\alpha(\xi)=(2\pi)^{n/2}\,\Big(  \prod_{ \alpha_m^e\ne 0,m=1}^n (\alpha_m^e-1){!!}\Big)\Big(\prod_{ \alpha_m^o\ne 0,m=1}^n    \alpha_m^o {!!}\,  ( i \xi_m) \Big).
\]

We   consider now the general case,  which follows essentially by iterating (4.2) and (4.3):

\begin{theorem}
Let $n\ge 3$. Then, for a  multi-index $\alpha$ we have  
\begin{align} 
\int_{\partial B(0,1)}x^{\alpha}& \,e^{-ix\cdot \xi}\, d\sigma (x)\nonumber\\
& = 
A_\alpha(\xi)\,\Psi_{(n +|\alpha|+\mu(o)-2)/2 }(\xi) +o(|\xi|).
\end{align}
\end{theorem}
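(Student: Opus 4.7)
The plan is to obtain (4.4) by differentiating the fundamental identity (4.1) componentwise in $\xi$ to bring $x^\alpha$ under the integral, and then to evaluate the resulting mixed partial derivative of $\Psi_{(n-2)/2}(\xi)$ modulo $o(|\xi|)$ by iterating Proposition 3.1 in each coordinate.

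Since $\partial B(0,1)$ is compact and $e^{-ix\cdot\xi}$ is smooth in $\xi$ uniformly in $x$, (4.1) may be differentiated under the integral sign any number of times. Because $\partial_{\xi_j}e^{-ix\cdot\xi}=-ix_j\,e^{-ix\cdot\xi}$, applying $\partial_{\xi_1}^{\alpha_1}\cdots\partial_{\xi_n}^{\alpha_n}$ to both sides of (4.1) gives
\[
\int_{\partial B(0,1)} x^\alpha\,e^{-ix\cdot\xi}\,d\sigma(x)\;=\;(2\pi)^{n/2}\,i^{|\alpha|}\,\partial_{\xi_1}^{\alpha_1}\cdots\partial_{\xi_n}^{\alpha_n}\Psi_{(n-2)/2}(\xi),
\]
so the task reduces to identifying the leading behavior of the mixed partial on the right near $\xi=0$.

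Next I would iterate Proposition 3.1 one coordinate at a time. An even exponent $\alpha_j=\alpha_j^e$ contributes, via (3.9), a factor $(-1)^{\alpha_j^e/2}(\alpha_j^e-1){!!}$ (interpreted as $1$ when $\alpha_j^e=0$) and raises the Bessel index by $\alpha_j^e/2$; an odd exponent $\alpha_j=\alpha_j^o$ contributes, via (3.10), a factor of the form $\pm\,\alpha_j^o{!!}\,\xi_j$ and raises the index by $(\alpha_j^o+1)/2$. Since $\Psi_\nu(\xi)$ is smooth in $\xi$ and coordinate derivatives commute, these leading terms compose multiplicatively. The total index shift is
\[
\sum_{j}\frac{\alpha_j^e}{2}\;+\;\sum_{j}\frac{\alpha_j^o+1}{2}\;=\;\frac{|\alpha|+\mu(\alpha^o)}{2},
\]
so the final Bessel index equals $(n+|\alpha|+\mu(\alpha^o)-2)/2$, as required. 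Collecting the $\xi_j$'s produced by odd exponents and redistributing one power of $i$ to each of them (the remaining $i^{|\alpha^e|}=(-1)^{|\alpha^e|/2}$ cancels the accumulated signs coming from (3.9)--(3.10)) reproduces the factor $A_\alpha(\xi)$.

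The main obstacle is controlling the cumulative error. Each single-variable application of (3.9) or (3.10) leaves a smooth remainder of order $o(|\xi|^{2-\varepsilon})$ or $o(|\xi|^{3-\varepsilon})$ built from higher Taylor terms of the same $\Psi$-family; subsequent coordinate differentiations acting on such a remainder produce either another $\Psi$-contribution of strictly higher index (absorbed into a tail of the same type) or an extra factor of $\xi_k$ via the chain rule through $|\xi|$. Hence every non-leading summand carries at least one more power of some $\xi_k$ than the announced main term, which yields the claimed $o(|\xi|)$ bound in the nontrivial cases $\mu(\alpha^o)\in\{0,1\}$. When $\mu(\alpha^o)\ge 2$ the announced main term is itself $O(|\xi|^{\mu(\alpha^o)})=o(|\xi|)$, and (4.4) reduces to checking that the Fourier transform and all of its first-order $\xi$-derivatives at the origin vanish; this follows from Proposition 2.1, since no shift of $\alpha$ by a single standard basis vector can make every component of $\alpha$ even, so both $\int x^\alpha\,d\sigma$ and $\int x_k\,x^\alpha\,d\sigma$ vanish by symmetry.
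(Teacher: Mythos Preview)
Your proposal is correct and follows essentially the same route as the paper: differentiate the fundamental identity (4.1) under the integral sign and then iterate Proposition~3.1 coordinate by coordinate, using the key observation (made explicitly in the paper just before the theorem) that the $o(|\xi|)$ remainder in (4.2)--(4.3) carries an extra power of the differentiated variable and hence survives differentiation in the remaining variables. The only organizational difference is that the paper first processes all odd exponents via (4.3) and then all even ones via (4.2), whereas you treat each coordinate in one pass; your discussion of the degenerate case $\mu(\alpha^o)\ge 2$ is in fact more explicit than the paper's.
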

\begin{proof}

 Given a multi-index $\alpha$, let $\alpha=\alpha^e +\alpha^o$.
Then,  iterating the relation (4.3) for those  $\xi_m$ with $\alpha_m^o\ne 0$ it readily follows that
\begin{align}\int_{\partial B(0,1)} & x^{\alpha^o} e^{-ix\cdot \xi}   \,d\sigma (x)
\nonumber\\
&=(2\pi)^{n/2}\, \Big(\,\prod_{\alpha_m^o\ne 0,m=1}^n    (\alpha_m^o){!!}\,(i\xi_m)\,\Big)\,   \Psi_{(n+|\alpha^o|+\mu(o)-2)/2}(\xi)+ o(|\xi|).
\end{align}

Now,   iterating the relation (4.2) for those  $\xi_m$ with $\alpha_m^e\ne 0$, since
$(n+|\alpha^o|+\mu(o)-2)/2+|\alpha^e|/2=(n+|\alpha|+\mu(o)-2)/2$, from (4.5)  it readily follows that 
\begin{align*}\int_{\partial B(0,1)}  x^{\alpha } e^{-ix\cdot \xi}   \,d\sigma (x)
&=\int_{\partial B(0,1)} x^{\alpha^o}  x^{\alpha^e} e^{-ix\cdot \xi}   \,d\sigma (x)\\
&=A_\alpha(\xi) \,   \Psi_{(n+|\alpha|+\mu(o)-2)/2}(\xi)+ o(|\xi|).
\end{align*}
and we have finished. 
\end{proof}

Wallis   $n$--dimensional  formula follows letting $\xi\to 0$ in (4.4). Indeed, the limit is equal to
\[ A_\alpha(0)\,\Psi_{(n +|\alpha|+\mu(o)-2)/2 }(0),
\]
which in turn is $0$ if $\mu(o)\ne 0$. Thus, we may assume that all the $\alpha_m$ are even, and then on account of (3.8) we have
that the limit is equal to
\begin{align*}(2\pi)^{n/2}&\,\Big(  \prod_{ \alpha_m\ne 0,m=1}^n (\alpha_m-1){!!}\Big)
\,\Psi_{(n +|\alpha|-2)/2 }(0)
\\
&=(2\pi)^{n/2} 2^{-(n +|\alpha|-2)/2}\frac1{\Gamma((n +|\alpha|)/2)}\,\Big(  \prod_{ \alpha_m\ne 0,m=1}^n (\alpha_m-1){!!}\Big),
\end{align*}
and the conclusion follows by a computational argument left to the reader.

\section{Applications}
We close the note with the integration of  circular and hyperbolic trigonometric functions over the unit sphere and the unit ball in $\R^n$. The results follow from the known fact that a power series converges uniformly
on compact subsets of its disc of convergence, and, therefore,  it   can
be integrated  termwise there. As for the uniform convergence of the series,  one may invoke the known fact that if $f(t)$  is analytic in
the disc $|t| < R$ , it is the sum of its Taylor series there, or else establish the convergence 
 invoking the asymptotics for the Bernoulli numbers, $B_k$,  or Euler numbers, $E_k$, \cite {TA, BE}.

Recall that 
\begin{equation*} t \coth(t)= \sum_{k=0}^\infty    \frac{2^{2k}}{(2k){!}}\,B_{2k}\, t^{2k}, 
\end{equation*}
where the series, by either criteria described above,  converges uniformly and absolutely for  $|t|<\pi$, 
and so  for a multi-index $\alpha$,   since  $B_0=1$,  the series  
\begin{equation}
x^\alpha\coth(x^\alpha)=1+
\sum_{k=1}^\infty    \frac{2^{2k}  }{(2k){!}}\,B_{2k} \,x^{2k\alpha }
\end{equation}
  converges uniformly and absolutely for  $|x|<R$ for some $R>1$. We may then integrate (5.1) termwise, and
 by (1.1)  or (2.1)  
 conclude that
\begin{align*}\frac1{\omega_n}\int_{\partial B(0,1)} x^\alpha\,&\coth(x^\alpha) \,d\sigma (x)\\
&= 1+\sum_{k=1}^\infty \frac{2^{2k} B_{2k} }{(2k){!}}\frac{\prod_{  \alpha_m\ne 0,m=1}^n (2k\alpha_m-1){!!}}{
(n+ 2|\alpha|\,k-2)\cdots (n+2)n}.
\end{align*}

Similarly,  for the integral over the unit ball,  by (1.2) or (2.7) 
 it follows that 
\begin{align*}\frac1{v_n}\int_{ B(0,1)} &x^\alpha \coth(x^\alpha) \,dx\\
&=1+\sum_{k=1}^\infty \frac{2^{2k} B_{2k} }{(2k){!}}\,\frac{\prod_{ \alpha_m\ne 0,m=1}^n (2k\alpha_m-1){!!}}{ (n+2|\alpha|k) (n+2|\alpha|k-2)  \cdots (n+2) }.
\end{align*}

As for circular trigonometric functions, consider, for instance,  the series expansion  for $t/\sin(t)$ given by 
\begin{equation*}\frac{ t}{\sin(t)}   = 1+ 2\sum_{k=1}^\infty (-1)^{k -1} \frac{(2^{2k-1}-1)  }{(2k){!}}\,B_{ 2k}\,t^{2k },  
\end{equation*}
which  converges uniformly and absolutely for   $|t|< \pi$. Hence,
for a multi-index $\alpha$   the series
\begin{equation}\frac{ x^\alpha}{\sin(x^\alpha)}   
= 1+ 2\sum_{k=1}^\infty   (-1)^{k -1} \frac{(2^{2k-1}-1)  }{(2k){!}}\,B_{ 2k}\,x^{2
k \alpha}, 
\end{equation}
converges uniformly and absolutely for $|x|<R$ for some $R>1$, and  we may evaluate the integral of (5.2) over the unit sphere and the unit ball as above. 

A slight variant works when   odd powers  are involved   in the expansion. For instance, consider  the tangent, which can be expanded as 
\begin{equation*}\tan(t)=\sum_{k=1}^\infty \tau_{2k-1} \,   B_{2k}\, t^{2k-1},
\end{equation*}
where
\[\tau_{2k-1}=(-1)^k 2^{2k} 
 \frac{(2^{2k-1}-1)}{(2k){!}}, \quad k=1,2,\ldots  \]
which 
  converges absolutely and uniformly for $|t|<  \pi/2 $.

Thus,  for a multi-index $\alpha$   the series
\begin{equation}\tan(x^\alpha) =
 \sum_{k=1}^\infty\tau_{2k-1} \, B_{2k}\,   x^{(2k -1)\alpha}
\end{equation}
converges absolutely and uniformly for  $|x|< R$, for some $R>1$, and therefore (5.3) 
 may be integrated  termwise over the unit sphere and the unit ball in $\R^n$.

Now,  the  monomials in (5.3)  integrate  to $0$ unless all the coordinates of  the   $ (2k  -1)\,\alpha $   are even integers, which is the case when all the $\alpha_j$ are even.
Let then $\alpha=2\alpha' $,
where the $\alpha_j'$ are nonnegative integers. 

Then, on account of (1.1) for $k=1,2,\ldots$,   
we have
\begin{align*}\frac1{\omega_n}\,\int_{\partial B(0,1)}  x^{ (2k-1)\alpha}  \,d\sigma (x) &
=\frac1{\omega_n}\,\int_{\partial B(0,1)}  x^{2 (2k-1)\alpha'}  \,d\sigma (x)\\ 
&=
 \frac{\prod_{  \alpha_m'\ne 0,m=1}^n (2(2k-1)\alpha_m'-1){!!}}{
(n+2(2k-1) |\alpha'| -2)\cdots (n+2) n }\\
&=
\frac{\prod_{  \alpha_m\ne 0,m=1}^n  ( (2k-1)\alpha_m-1){!!}}{
(n+(2k-1)|\alpha|-2)\cdots (n+2) n },
\end{align*}
and the answer follows readily from this by integrating (5.3) termwise.

As for the integral over the unit ball, observe that, similarly,  by (1.2), 
\begin{align*}\frac1{v_n}\,\int_{ B(0,1)} & x^{(2k-1)\alpha}  \,dx\\
&=  \frac{\prod_{  \alpha_m\ne 0,m=1}^n ((2k-1)\alpha_m-1){!!}}{(n+(2k-1)|\alpha| )
(n+(2k-1)|\alpha| -2)\cdots (n+2) },
\end{align*}
and the integral can be readily obtained by integrating (5.3) termwise.

Which brings us to the closing  remarks.  G. S. Ely considered  the question of expanding powers of trigonometric functions
given the expansions of the functions themselves \cite{GE}. In the case of the tangent 
one derives
\begin{equation*}\tan^3(t) = \sum_{k=1}^\infty\big(\, k  \, (2k+1)\, \tau_{2k+1}\,   B_{2k+2}-\tau_{2k-1} \,   B_{2k}\, \big) \, t^{2k-1}.
\end{equation*}

We leave to the reader the evaluation of the integral
 \[\int_{\partial B(0,1)}\tan^3(x^{\alpha})   \,d\sigma (x)\,.\]

Finally, unlike the functions considered above,
the expansion of the secant 
involves the {\it {Euler numbers}}, $ E_k$, and is given by
\[\sec(t)=\sum_{k=0}^\infty(-1)^k\,  \frac1{(2k){!}} E_{2k} \, t^{2k}.
\]

 Simple algebraic manipulations give that   the expansion for  $\sec^3(t)$ is given by
\[\sec^3(t)=\frac12\sum_{k=0}^\infty (-1)^k\frac1{(2k){!}} \big( E_{2k} - E_{2(k+1)}\big)t^{2k}.
\]

Then, for an appropriate  real multi-index $\beta$   the reader is invited to evaluate
\[  \int_{  B(0,1)}  \sec^3(x^{\beta})  \,dx.
\]

ORCID 

Calixto P. Calder\'on  https://orcid.org/0000-0002-4211-2110

Alberto Torchinsky https://orcid.org/0000-0001-8325-3617


\begin{thebibliography}{10}
\bibitem{TA} {Apostol, T. M.}: Introduction to Analytic   Number Theory. Undergraduate Text in Mathematics, Springer (1976) 
\bibitem{ABR} { Axler, S.}, {Bourdon, P.}, {Ramey, W.}: Harmonic Function Theory.  Springer, (2001)
\bibitem{AB} {Asaro, R. J.},   {Barnett, D. M.}: The non--uniform transformation strain problem for an anisotropic ellipsoidal inclusion.  J. Mech. Phys. Solids 23, 77--83 (1975)
\bibitem{JAB} {Baker, J. A.}:  Integration over spheres and the divergence theorem for balls. Amer. Math.  Monthly  104, 36-47  (1997)
\bibitem{BE} {Becerra, L.},  {Deeba, E. Y.}: Euler and Bernoulli numbers and   series representation of some elementary functions. Pi Mu Epsilon Journal 12, 521--528  (2008)
\bibitem{BS} {Bochniak, A.} and {Sitarz, A.}:  Spectral interaction between universes. JCAP04(2022)055
DOI 10.1088/1475-7516/2022/04/055.
\bibitem{GE}{Ely, G. S.}: {Some notes on the numbers of Bernoulli and Euler}.
Amer. J. Math.   5,  337--341 (1882)
\bibitem{GBF} {Folland, G. B.}: How to integrate a polynomial over a sphere. Amer. Math. Monthly  108,  446--448 (2001)
\bibitem{JG} {G\'elinas, J.}: Definitions, notations and proofs for  Bernoulli numbers.   arXiv:1901.04096vq1, (2019)
\bibitem{FJ} {John, F.}: Plane Waves and Spherical Means applied to Partial Differential Equations. Reprint of the 1955 original, Dover Publications, Inc., Mineola, NY (2004)
\bibitem{DK} {Kazaniroff, D. K.}: On Wallis formula. 
Edinburgh Math. Notes 40,  19- 21 (1956)
\bibitem{TM} {Mura, T.}: Micromechanics of Defects in Solids. 2nd ed., Springer (1987)
\bibitem{N} {Namazi, J.}:   
A generalized Wallis formula.  Appl. Math. 9, 207--209 (2018)
\bibitem{YO} {Othmani, Y.}: Polynomial integration over the unit sphere.  Appl. Math.  Let. 24, 1260--1264   (2011)
\bibitem{YOHM} {Othmani, Y.},   {Maclean, H.}:  Polynomial integration over the unit circle. Appl. Math. Let. 25, 434–438  (2012)
\bibitem{SW} {Stein, E. M.}, {Weiss, G.}: Introduction to Fourier Analysis on Euclidean Spaces.
Princeton Mathematical Series, Princeton University Press, Princeton, NJ (1971)
\bibitem{JW} Wang, J.: 2020 Class. Quantum Grav. 37 085004
DOI 10.1088/1361-6382/ab719d	
\bibitem{W} {Watson, G. N.}: A Treatise on the Theory of Bessel Functions.
 Cambridge Mathematical Library, Cambridge University Press, Cambridge (1995)
\bibitem{HW} {Weyl, H.}:  On the volume of tubes.  Amer. J. Math. 61, 461--472  (1939)
 \end{thebibliography}
\end{document}